\theoremstyle{plain}
\numberwithin{equation}{section}
\newtheorem{thm}{Theorem}[section]
\newtheorem{theorem}[thm]{Theorem}
\newtheorem{example}[thm]{Example}
\newtheorem{lemma}[thm]{Lemma}
\newtheorem{remark}[thm]{Remark}
\begin{document}

\setcounter{page}{1}
\title[Periodic representations for cubic irrationalities]{Periodic representations for cubic irrationalities}
\author{Marco Abrate}
\address{Dipartimento di Matematica\\
                Universit\`{a} di Torino\\
                Via Carlo Alberto 8\\
                Torino, Italy}
\email{marco.abrate@unito.it}
\author{Stefano Barbero}
\address{Dipartimento di Matematica\\
                Universit\`{a} di Torino\\
                Via Carlo Alberto 8\\
                Torino, Italy}
\email{stefano.barbero@unito.it}
\author{Umberto Cerruti}
\address{Dipartimento di Matematica\\
                Universit\`{a} di Torino\\
                Via Carlo Alberto 8\\
                Torino, Italy}
\email{umberto.cerruti@unito.it}
 \author{Nadir Murru}
\address{Dipartimento di Matematica\\
                Universit\`{a} di Torino\\
                Via Carlo Alberto 8\\
                Torino, Italy}
\email{nadir.murru@unito.it}

\begin{abstract}
In this paper we present some results related to the problem of finding periodic representations for algebraic numbers. In particular, we analyze the problem for cubic irrationalities. We show an interesting relationship between the convergents of bifurcating continued fractions related to a couple of cubic irrationalities, and a particular generalization of the R\'edei polynomials. Moreover, we give a method to construct a periodic bifurcating continued fraction for any cubic root paired with another determined cubic root.
\end{abstract}

\maketitle

\section{Introduction}

In 1839 Hermite \cite{Her} posed to Jacobi the problem of finding methods for writing numbers that reflect special algebraic properties, i.e., finding periodic representations for algebraic numbers. Continued fractions completely solve this problem for every quadratic irrationality. This is the only known answer, indeed it has not yet been found a method in order to give a periodic representation for every algebraic irrationality of order greater than two. It seems natural to attempt the resolution of the Hermite problem researching some generalization of continued fractions. The first effort in this sense is due to Euler \cite{Euler} in 1749, whose algorithm can provide periodic representations for cubic irrationalities. Successively, the algorithm was modified by Jacobi \cite{Jac} in 1868 and extended to any algebraic irrationalities by Perron \cite{Per} in 1907 (for a complete survey about the Jacobi--Perron algorithm see \cite{Ber}). During the years this generalization and some variations of the continued fractions have been deeply studied. For example Daus \cite{Daus}  developed the Jacobi algorithm for a particular couple of cubic irrationalities, connecting this study with the cubic Pell equation, and Lehemer \cite{Lehmer} examined the convergence of particular periodic expansions. Further developements on the Jacobi--Perron algorithm can be found in \cite{Bre},  \cite{Fer}, \cite{Gar} \cite{Gup1}, \cite{Gup2}, \cite{Ito}, \cite{Sc}.\\
In this paper, we focus only on the Jacobi algorithm and we study periodic representations and approximations for cubic irrationalities. In particualar, introducing a generalization of the R\'edei rational functions \cite{Redei}, for every couple $(\sqrt[3]{d^2},\sqrt[3]{d})$ we provide periodic representations depending on a parameter $z$ which can be any integer. Choosing different values for $z$, it is possible to obtain different periodic expansions and approximations for these irrationalities. Moreover, such representation has the advantage of having a small period making it easy to handle. Indeed, a problem of continued fractions and their generalization is the length of the period, which can be very large. Furthermore, in the case of the cubic irrationalities, periodicity is not guaranteed. As pointed out in \cite{Tamura}, it seems that Jacobi algorithm is not periodic for some couple of cubic irrationalities as, e.g., $(\sqrt[3]{3},\sqrt[3]{9})$ (however it would be possible to find a cubic irrationality $\alpha$ such that the expansion of $(\sqrt[3]{3},\alpha)$ is periodic). Thus, the possibility to obtain a periodic expansion for every couple $(\sqrt[3]{d^2},\sqrt[3]{d})$ appears an important result in order to overcome the problems concerning the periodicity of the Jacobi algorithm.\\
The R\'edei rational functions, generalized in the next section, arise from the development of $(z+\sqrt{d})^n$, where $z$ is an integer and where $d$ is a nonsquare positive integer. One can write
\begin{equation}\label{pow}(z+\sqrt{d})^n=N_n(d,z)+D_n(d,z)\sqrt{d}\ ,\end{equation}
where
$$ N_n(d,z)=\sum_{k=0}^{[n/2]}\binom{n}{2k}d^kz^{n-2k}, \quad D_n(d,z)=\sum_{k=0}^{[n/2]}\binom{n}{2k+1}d^kz^{n-2k-1}.  $$
The R\'edei rational functions $Q_n(d,z)$ are defined by 
\begin{equation}\label{red}Q_n(d,z)=\cfrac{N_n(d,z)}{D_n(d,z)}, \quad \forall n\geq1 \ .\end{equation}
It is well--known their multiplicative property
\begin{equation*}Q_{nm}(d,z)=Q_n(d,Q_m(d,z))\ ,\end{equation*}
for any couple of indexes $n,m$. Thus the R\'edei functions are closed with respect to composition and satisfy the commutative property
$$Q_n(d,Q_m(d,z))=Q_m(d,Q_n(d,z))\ .$$ 
The R\'edei rational functions reveal their utility in several fields of number theory. Given a finite field $\mathbb F_q$, of order $q$, and $\sqrt{d}\not\in\mathbb F_q$, then $Q_n(d,z)$ is a permutation of $\mathbb F_q$ if and only if $(n,q+1)=1$ (\cite{Lidl}, p. 44). Another recent application of these functions provides a way to find a new bound for multiplicative character sums of nonlinear recurring sequences \cite{Gomez}. Moreover, they can be used in order to generate pseudorandom sequences \cite{Topu} and to create a public key cryptographic system \cite{Nob}. In a previous work \cite{bcm} we have seen how R\'edei rational functions can be used in order to generate solutions of the Pell equation in an original way, applying them in a totally new field with respect to the classic ones. Furthermore, in \cite{abcm} we have introduced these functions as convergents of certain periodic continued fractions which always represent square roots. Here we generalize this construction for cubic roots, providing a generalization of the R\'edei rational functions in order to obtain periodic representations for every cubic root.

\section{The Jacobi algorithm and bifurcating continued fractions }
In this section we briefly recall the Jacobi algorithm. It is a generalization of the euclidean algorithm used for constructing the classic continued fractions. In this generalization instead of representing a real number by an integer sequence, a couple of real number is represented by a couple of integer sequences. The algorithm that provides such integer sequences is
\begin{equation} \label{algobiforcanti} \begin{cases} a_n=[x_n] \cr b_n=[y_n] \cr x_{n+1}=\cfrac{1}{y_n-[y_n]} \cr y_{n+1}=\cfrac{x_n-[x_n]}{y_n-[y_n]} \ , \end{cases} \end{equation}
$n=0,1,2,...$, for any couple of real numbers $x=x_0$ and $y=y_0$. We can retrieve $x$ and $y$ from the sequences $(a_n)_{n=0}^{+\infty}$ and $(b_n)_{n=0}^{+\infty}$ using
\begin{equation} \label{algobiforcanti2} \begin{cases} x_n=a_n+\cfrac{y_{n+1}}{x_{n+1}}  \cr y_n=b_n+\cfrac{1}{x_{n+1}}  \end{cases} \end{equation}
$n=0,1,2,...$, indeed by equations (\ref{algobiforcanti}) it follows $$a_n+\cfrac{y_{n+1}}{x_{n+1}}=a_n+\cfrac{\cfrac{x_n-a_n}{y_n-b_n}}{\cfrac{1}{y_n-b_n}}=a_n+x_n-a_n=x_n, \quad \forall n\geq0$$ $$b_n+\cfrac{1}{x_{n+1}}=b_n+\cfrac{1}{\cfrac{1}{y_n-b_n}}=b_n+y_n-b_n=y_n,\quad \forall n\geq0.$$
Therefore, the real numbers $x$ and $y$ are represented by the sequences as follows:
\begin{equation} \label{fcb} x=a_0+\cfrac{b_1+\cfrac{1}{a_2+\cfrac{\left(b_3+\cfrac{1}{\ddots}\right)}{\left(a_3+\cfrac{\ddots}{\ddots}\right)}}}{a_1+\cfrac{b_2+\cfrac{1}{\left(a_3+\cfrac{\ddots}{\ddots}\right)}}{a_2+\cfrac{\left(b_3+\cfrac{1}{\ddots}\right)}{\left(a_3+\cfrac{\ddots}{\ddots}\right)}}} \quad \text{and} \quad y=b_0+\cfrac{1}{a_1+\cfrac{b_2+\cfrac{1}{\left(a_3+\cfrac{\ddots}{\ddots}\right)}}{a_2+\cfrac{\left(b_3+\cfrac{1}{\ddots}\right)}{\left(a_3+\cfrac{\ddots}{\ddots}\right)}}}\end{equation}
We call \emph{ternary} or \emph{bifurcating} continued fraction such couple of objects representing the numbers $x$ and $y$. We call \emph{partial quotients} the integers $a_i$ and $b_i$, for $i=0,1,2,...$ .\\
We can briefly write the bifurcating continued fraction (\ref{fcb}) with the notation $[\{a_0,a_1,a_2,...\},\{b_0,b_1,b_2,...\}]$ and we can introduce the notion of convergent like for the classic continued fraction (for a complete survey of the Jacobi--Perron algorithm see \cite{Ber}). The finite bifurcating continued fraction $$[\{a_0,a_1,...,a_n\},\{b_0,b_1,...,b_n\}]=\left(\cfrac{A_n}{C_n},\cfrac{B_n}{C_n}\right),\quad \forall n\geq0$$ is called $n$\emph{th convergent}, where the integers $A_n,B_n,C_n$ are defined by the following recurrent relations (see, e.g., \cite{Gup1}, \cite{Gup2}) for every $n\geq3$:
\begin{equation} \label{convergenti} \begin{cases} A_n=a_nA_{n-1}+b_nA_{n-2}+A_{n-3} \cr B_n=a_nB_{n-1}+b_nB_{n-2}+B_{n-3} \cr C_n=a_nC_{n-1}+b_nC_{n-2}+C_{n-3}  \end{cases}. \end{equation} 
We can introduce the convergents of a bifurcating continued fraction using a matricial description, like for the classic continued fractions. It is easy to prove by induction that
\begin{equation} \label{matrix-fcb} \begin{pmatrix}  a_0 & 1 & 0 \cr b_0 & 0 & 1 \cr 1 & 0 & 0 \end{pmatrix}... \begin{pmatrix} a_n & 1 & 0 \cr b_n & 0 & 1 \cr 1 & 0 & 0  \end{pmatrix}=\begin{pmatrix} A_n & A_{n-1} & A_{n-2} \cr B_n & B_{n-1} & B_{n-2} \cr C_n & C_{n-1} & C_{n-2}  \end{pmatrix}\end{equation} 
\[ \Updownarrow \] \[\left(\cfrac{A_n}{C_n},\cfrac{B_n}{C_n}\right)=[\{a_0,...,a_n\},\{b_0,...,b_n\}]. \]
\begin{remark}
We observe that the algorithm (\ref{algobiforcanti}) does not provide every bifurcating continued fraction expansion (\ref{fcb}). Indeed, it is easy to prove that using equations (\ref{algobiforcanti}) we always obtain $a_i\geq b_i$ for all $i\geq1$. However, a bifurcating continued fraction can represent a couple of real numbers (i.e., the limit of the convergents exists and it is finite), although it is not obtained starting from the Jacobi algorithm.
\end{remark}
Every periodic bifurcating continued fraction converges to a couple of cubic irrationalities, but it is unproved the viceversa. We do not know if, given any cubic irrationality, another cubic irrationality ever exists such that their bifurcating continued fraction expansion is periodic. Therefore, the Hermite problem is still open for any algebraic irrationalities, except for the quadratic case.\\
The properties of the Jacobi algorithm can be studied by using the characteristic polynomial of the matrices (\ref{matrix-fcb}). For instance, if we consider the purely periodic bifurcating continued fraction $(\alpha,\beta)=[\{\overline{a_0,...,a_n}\},\{\overline{b_0,...,b_n}\}]$, such fraction converges to cubic irrationalities related to the roots of the polynomial
$$\det \begin{pmatrix} A_n-x & A_{n-1} & A_{n-2} \cr B_n & B_{n-1}-x & B_{n-2} \cr C_n & C_{n-1} & C_{n-2}-x \end{pmatrix}=0.$$
Moreover, it is possible to directly study the convergence, considering that in this case we can write  $(\alpha,\beta)=[\{a_0,...,a_n,\alpha\},\{b_0,...,b_n,\beta\}]$ and
$$\alpha=\cfrac{\alpha A_n+\beta A_{n-1}+A_{n-2}}{\alpha C_n+\beta C_{n-1}+C_{n-2}},\quad \beta=\cfrac{\alpha B_n+\beta B_{n-1}+B_{n-2}}{\alpha C_n+\beta C_{n-1}+C_{n-2}}.$$
Similar considerations can be performed in the case of eventually periodic fractions.\\
The difficulties to prove an analogous of the Lagrange theorem (every quadratic irrationalities has periodic continued fraction expansion) arise by the fact that there are no explicit forms for cubic irrationalities. For this reason many different studies of the Jacobi algorithm has been performed. For example, the discussion of bifurcating continued fractions is related to the problem of finding units in cubic fields. Other ways involve the research of bound for the partial quotients and the convergents.\\
The Jacobi algorithm can be also approached studying a transformation of $\mathbb R^2$ into itself (see \cite{Sc}), defined as follows:
$$T(\alpha,\beta)=\left( \cfrac{\beta}{\alpha}-\left[ \cfrac{\beta}{\alpha} \right], \cfrac{1}{\alpha}-\left[\cfrac{1}{\alpha}\right] \right).$$
Using the following auxiliary maps over $\mathbb R^2$, defined by
$$\tau(\alpha,\beta)=\left[\cfrac{1}{\alpha}\right],\quad \eta(\alpha,\beta)=\left[\cfrac{\beta}{\alpha}\right],$$
the partial quotients of the Jacobi algorithm are determined by
\begin{equation} \label {qpT} \begin{cases} a_i=\tau(\alpha_i,\beta_i) \cr b_i=\eta(\alpha_i,\beta_i) \end{cases} \end{equation}
where $(\alpha_i,\beta_i)=T(\alpha_{i-1},\beta_{i-1})$, for $i=0,1,2,...$. Initializing the procedure with $\alpha_0=\cfrac{1}{x}$ and $\beta_0=\cfrac{y}{x}$, for any couple of real numbers $x$ and $y$, the sequence of partial quotients provided by equation (\ref{qpT}) coincides with the sequence provided by the Jacobi algorithm as presented in (\ref{algobiforcanti}). In this way periodicity, convergence and other properties of Jacobi algorithm can be found studying the transformation $T$ which satisfies ergodic properties.\\
In the next section, we propose a different approach, studying the convergence properties of some polynomials and exploiting the fact that they satisfies linear recurrent relations.

\section{Generalized R\'edei rational functions and periodic representations of cubic roots}
As we have seen in the introduction, R\'edei rational functions are strictly connected to square roots. In this paragraph we propose a generalization of the R\'edei polynomials related to every algebraic irrationality and then we will focus on cubic irrationalities, putting in evidence a connection with bifurcating continued fractions and the Hermite problem.\\
Instead of considering the expansion of $(z+\sqrt{d})^n$, we start analyzing the expansion of $(z+\sqrt[e]{d^{e-1}})^n$, where $d$ is not an $e$th power. We observe that in the expansion of $(z+\sqrt[e]{d^{e-1}})^n$ we have coefficients for $\sqrt[e]{d^2},\sqrt[e]{d^3},...,\sqrt[e]{d^{e-1}}$. Thus, we use $e$ polynomials in order to write its expansion:
\begin{equation} \label{ap} (z+\sqrt[e]{d^{e-1}})^n=\mu_n(e,0,d,z)+\mu_n(e,1,d,z)\sqrt[e]{d}+...+\mu_n(e,e-1,d,z)\sqrt[e]{d^{e-1}},  \end{equation}
where
\begin{equation} \label{appoli}  \mu_n=\mu_n(e,k,d,z)=\sum_{h=0}^n\binom{n}{eh-k}d^{(e-1)h-k}z^{n-eh+k}.  \end{equation}
Using the $e \times e $ matrix
\begin{equation} \label{apmatrix} \begin{pmatrix} z & d & 0 & \dots & 0 \cr 0 & z & d & \dots & 0 \cr \vdots & \ddots & \ddots & \ddots & \vdots \cr 0 & \dots & 0 & z & d \cr 1 & 0 & 0 & \dots & z \end{pmatrix},  \end{equation}
whose characteristic polynomial $(x-z)^e-d^{e-1}$ has root of larger modulus $z+d^{(e-1)/e}$, if we define $\mu_n(k)=\mu_n(e,k,d,z)$, where $k=0,1,\ldots,e-1$, we have
\[  \begin{pmatrix} z & d & 0 & \dots & 0 \cr 0 & z & d & \dots & 0 \cr \vdots & \ddots & \ddots & \ddots & \vdots \cr 0 & \dots & 0 & z & d \cr 1 & 0 & 0 & \dots & z \end{pmatrix}^n=\begin{pmatrix} \mu_n(0) & d \mu_n(e-1) & \dots & d \mu_n(1) \cr \mu_n(1) & \ddots & \ddots & \vdots \cr \vdots & \ddots & \ddots & d \mu_n(e-1) \cr \mu_n(e-1) & \dots & \mu_n(1) & \mu_n(0)\end{pmatrix}. \]
\begin{example}
When $e=3$, we have
$$\begin{pmatrix} z & d & 0 \cr 0 & z & d \cr 1 & 0 & z  \end{pmatrix}^n=\begin{pmatrix} \mu_n(0) & d\mu_n(2) & d\mu_n(1) \cr \mu_n(1) & \mu_n(0) & d\mu_n(2) \cr \mu_n(2) & \mu_n(1) & \mu_n(0)  \end{pmatrix}.$$
\end{example}
\begin{remark}
The sequences of polynomials $\mu_n$ are linear recurrent sequences. Indeed, they correspond to the entries of a matrix power, and so they recur with the characteristic polynomial of the resulting power matrix. In particular the sequence $(\mu_n(e,k,d,z))_{n=0}^{\infty}$ recurs with polynomial $(x-z)^e-d^{e-1}$.
\end{remark} 
We can observe the convergence of the polynomials $\mu_n$:
\[\lim_{n\rightarrow\infty} \cfrac{\mu_n(e,k,d,z)}{\mu_n(e,e-1,d,z)}=\sqrt[e]{d^{e-k-1}},\quad k=0,...,e-2. \]
Now, we focus our attention on the cubic case. In the next theorem we prove the convergence of the polynomials $\mu_n$ (since these polynomials will be used in the next paragraph) when $e=3$.
\begin{theorem} \label{mu-3}
Let $d$ be an integer not cube, then
$$\lim_{n\rightarrow\infty}\cfrac{\mu_n(3,0,d,z)}{\mu_n(3,2,d,z)}=\sqrt[3]{d^2}$$
$$\lim_{n\rightarrow\infty}\cfrac{\mu_n(3,1,d,z)}{\mu_n(3,2,d,z)}=\sqrt[3]{d}\ .$$
\end{theorem}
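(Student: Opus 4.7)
The plan is to exploit the linear-recurrent structure of the sequences $\mu_n(3,k,d,z)$: by the remark preceding the theorem, each one satisfies the recurrence with characteristic polynomial $(x-z)^3-d^2$, whose three roots are
$$\lambda_1=z+\sqrt[3]{d^2},\qquad \lambda_2=z+\omega^2\sqrt[3]{d^2},\qquad \lambda_3=z+\omega\sqrt[3]{d^2},$$
where $\omega=e^{2\pi i/3}$. The asymptotics of the $\mu_n(k)$ will then be read off from the root of largest modulus.

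The first step is to upgrade the scalar identity (\ref{ap}) at $e=3$, namely
$$(z+\sqrt[3]{d^2})^n=\mu_n(0)+\mu_n(1)\sqrt[3]{d}+\mu_n(2)\sqrt[3]{d^2},$$
to a polynomial identity in $t$ modulo $t^3=d$ (transparent from the explicit formula (\ref{appoli})). Substituting $t=\omega\sqrt[3]{d}$ and $t=\omega^2\sqrt[3]{d}$ yields the companion identities
$$\lambda_2^n=\mu_n(0)+\omega\mu_n(1)\sqrt[3]{d}+\omega^2\mu_n(2)\sqrt[3]{d^2},\qquad \lambda_3^n=\mu_n(0)+\omega^2\mu_n(1)\sqrt[3]{d}+\omega\mu_n(2)\sqrt[3]{d^2}.$$
This is a Vandermonde change of basis between $\{1,\omega,\omega^2\}$; inverting it with $1+\omega+\omega^2=0$ produces the closed forms
$$3\mu_n(0)=\lambda_1^n+\lambda_2^n+\lambda_3^n,\quad 3\sqrt[3]{d}\,\mu_n(1)=\lambda_1^n+\omega^2\lambda_2^n+\omega\lambda_3^n,\quad 3\sqrt[3]{d^2}\,\mu_n(2)=\lambda_1^n+\omega\lambda_2^n+\omega^2\lambda_3^n.$$

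With these in hand, both limits reduce to factoring $\lambda_1^n$ out of the numerator and denominator of $\mu_n(0)/\mu_n(2)$ and $\mu_n(1)/\mu_n(2)$, and noting $(\lambda_{2,3}/\lambda_1)^n\to 0$. The step I expect to be the real point of contact with the hypotheses is the strict dominance $|\lambda_1|>|\lambda_2|=|\lambda_3|$: a direct computation gives
$$|\lambda_1|^2-|\lambda_{2,3}|^2=3z\sqrt[3]{d^2},$$
so strict dominance holds precisely when $z>0$ (the regime already highlighted after (\ref{apmatrix})). I would therefore run the argument under the standing hypothesis $z>0$ (with a parallel treatment in the range $z<-\sqrt[3]{d^2}$, where the real root $\lambda_1$ again separates in modulus from the complex pair); once dominance is in place, the two limits follow immediately from the closed forms above.
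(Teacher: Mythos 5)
Your argument is essentially the paper's: both proofs rest on the fact that each sequence $(\mu_n(3,k,d,z))_{n}$ satisfies the recurrence with characteristic polynomial $(x-z)^3-d^2$, expand via the Binet formula over the three roots, and read the limits off the coefficients of the dominant root. Your route to those coefficients --- writing $\lambda_j^n=\mu_n(0)+\omega^{j-1}\mu_n(1)\sqrt[3]{d}+\omega^{2(j-1)}\mu_n(2)\sqrt[3]{d^2}$ and inverting the Vandermonde system --- is a tidier, equivalent substitute for the paper's three $3\times 3$ linear systems in the initial conditions, and it produces the same leading coefficients $1/3$, $1/(3\sqrt[3]{d})$, $1/(3\sqrt[3]{d^2})$. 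You are in fact more careful than the paper on the only delicate point: the paper asserts without justification that $\alpha_1=z+\sqrt[3]{d^2}$ has strictly larger modulus than the complex pair, whereas your identity $|\lambda_1|^2-|\lambda_{2,3}|^2=3z\sqrt[3]{d^2}$ makes explicit that this holds exactly when $z>0$. The one thing to correct is your parenthetical claim that dominance is restored for $z<-\sqrt[3]{d^2}$: by that very identity the difference $3z\sqrt[3]{d^2}$ is negative for every $z<0$, so the complex pair always dominates there, the ratios $\mu_n(0)/\mu_n(2)$ and $\mu_n(1)/\mu_n(2)$ oscillate rather than converge, and no parallel treatment exists; the correct standing hypothesis is simply $z>0$ (consistent with the positive values of $z$ used in the paper's examples, though the theorem as printed states no condition on $z$).
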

\begin{proof}
The sequence  $\left(\mu_n(3,k,d,z)\right)_{n=0}^{\infty}$ recurs with polynomial $(x-z)^3-d^2$ with real root $\alpha_1=z+\sqrt[3]{d^2}$ of larger modulus than the remaining roots $\alpha_2, \alpha_3$. By the Binet formula
$$\begin{cases} \mu_n(3,0,d,z)=a_1\alpha_1^n+a_2\alpha_2^n+a_3\alpha_3^n \cr \mu_n(3,1,d,z)=b_1\alpha_1^n+b_2\alpha_2^n+b_3\alpha_3^n \cr \mu_n(3,2,d,z)=c_1\alpha_1^n+c_2\alpha_2^n+c_3\alpha_3^n \ .\end{cases} $$
Solving the systems
$$\begin{cases} a_1+a_2+a_3=1 \cr a_1\alpha_1+a_2\alpha_2+a_3\alpha_3=z \cr a_1\alpha_1^2+a_2\alpha_2^2+a_3\alpha_3^2=z^2  \end{cases} \quad \begin{cases} b_1+b_2+b_3=0 \cr b_1\alpha_1+b_2\alpha_2+b_3\alpha_3=0 \cr b_1\alpha_1^2+b_2\alpha_2^2+b_3\alpha_3^2=d  \end{cases} \quad \begin{cases} c_1+c_2+c_3=0 \cr c1\alpha_1+c_2\alpha_2+c_3\alpha_3=1 \cr c_1\alpha_1^2+c_2\alpha_2^2+c_3\alpha_3^2=2z \end{cases}$$
we easily obtain
$$a_1=\cfrac{1}{3},\quad b_1=\cfrac{1}{3\sqrt[3]{d}},\quad c_1=\cfrac{1}{3\sqrt[3]{d^2}}\ .$$
Finally,
$$\cfrac{\mu_n(3,0,d,z)}{\mu_n(3,2,d,z)}=\cfrac{a_1\alpha_1^n+a_2\alpha_2^n+a_3\alpha_3^n}{c_1\alpha_1^n+c_2\alpha_2^n+c_3\alpha_3^n}\rightarrow \cfrac{a_1}{c_1}=\sqrt[3]{d^2}$$
$$\cfrac{\mu_n(3,1,d,z)}{\mu_n(3,2,d,z)}=\cfrac{b_1\alpha_1^n+b_2\alpha_2^n+b_3\alpha_3^n}{c_1\alpha_1^n+c_2\alpha_2^n+c_3\alpha_3^n}\rightarrow \cfrac{b_1}{c_1}=\sqrt[3]{d}\ .$$
\end{proof}
Now, we use the polynomials $\mu_n$ together with bifurcating continued fractions in order to approximate cubic roots. In particular we provide a periodic bifurcating continued fraction expansion for any cubic root, whose convergents are the ratios of these polynomials.\\
First of all, we study bifurcating continued fractions with rational partial quotients.  
\begin{remark}
In \cite{abcm} and \cite{abcm2}, we have studied algebraic properties of continued fractions with rational partial quotients. In this way, it is possible to obtain periodic expansions for quadratic irrationalities more handily. Furthermore, these continued fractions have interesting properties of approximations related to R\'edei rational functions. We proved that they provide, e.g., at the same time Newton and Pad\'e approximations of square roots. Thus, it seems natural to use rational partial quotients with bifurcating continued fractions.
\end{remark}
If we consider a bifurcating continued fraction $$\left[\left\{\cfrac{a_0}{b_0},\cfrac{a_1}{b_1},... \right\},\left\{\cfrac{c_0}{d_0},\cfrac{c_1}{d_1},...\right\}\right]\ ,$$ the sequences $A_n,B_n,C_n$ in (\ref{convergenti}) are rational numbers. Therefore, we can study the recurrence of numerators and denominators of such rational sequences, In the following theorem we provide the result only for the sequence $A_n$, similar results clearly hold for $B_n,C_n$.
\begin{lemma} \label{fcbr-conv} Given
$$\left[\left\{\cfrac{a_0}{b_0},\cfrac{a_1}{b_1},...\right\},\left\{\cfrac{c_0}{d_0},\cfrac{c_1}{d_1},...\right\}\right],$$
let $(A_n)_{n=0}^{\infty},(B_n)_{n=0}^{\infty},(C_n)_{n=0}^{\infty}$ be the sequences such that $$\left[\left\{\cfrac{a_0}{b_0},...,\cfrac{a_n}{b_n}\right\},\left\{\cfrac{c_0}{d_0},...,\cfrac{c_n}{d_n}\right\}\right]=\left(\cfrac{A_n}{C_n},\cfrac{B_n}{C_n}\right)$$ for all $n\geq0$. Then $A_n=\cfrac{s_n}{t_n}$ for every $n\geq0$, where
\[ \begin{cases} s_{-1}=1/d_0, \quad s_0=a_0,\quad s_1=a_0a_1d_1+b_0b_1c_1 \cr
s_n=a_nd_ns_{n-1}+b_nb_{n-1}c_nd_{n-1}s_{n-2}+b_nb_{n-1}b_{n-2}d_nd_{n-1}d_{n-2}s_{n-3},\quad n\geq2 \end{cases} \]
and
\[\begin{cases} t_0=b_0 \cr t_n=b_0\prod_{i=1}^nb_id_i,\quad n\geq1\ . \end{cases}\]
\end{lemma}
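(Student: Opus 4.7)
The plan is to prove the identity $A_n = s_n/t_n$ by strong induction on $n$, starting from the recurrence (\ref{convergenti}). Since the matrix identity (\ref{matrix-fcb}) is purely formal, replacing $a_n$ by $a_n/b_n$ and $b_n$ by $c_n/d_n$ in the entries of the factors yields
$$ A_n \;=\; \frac{a_n}{b_n}\,A_{n-1} \;+\; \frac{c_n}{d_n}\,A_{n-2} \;+\; A_{n-3}, \qquad n\geq 3,$$
while $A_0$, $A_1$, $A_2$ are obtained directly from the first one, two, three matrix products. In particular $A_0=a_0/b_0=s_0/t_0$ and $A_1=(a_0 a_1)/(b_0 b_1)+c_1/d_1=(a_0 a_1 d_1+b_0 b_1 c_1)/(b_0 b_1 d_1)=s_1/t_1$, which dispatches the base cases.

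For the inductive step I would assume $A_k=s_k/t_k$ for $k=n-1,n-2,n-3$ and substitute into the displayed recurrence above. The key observation is that the denominator sequence $t_n=b_0\prod_{i=1}^n b_i d_i$ satisfies
$$\frac{t_n}{t_{n-1}}=b_n d_n,\qquad \frac{t_n}{t_{n-2}}=b_n b_{n-1} d_n d_{n-1},\qquad \frac{t_n}{t_{n-3}}=b_n b_{n-1} b_{n-2}\,d_n d_{n-1}d_{n-2}.$$
Clearing $t_n$ in the recurrence for $A_n$ and using these three ratios produces exactly
$$ a_n d_n\, s_{n-1} + b_n b_{n-1} c_n d_{n-1}\, s_{n-2} + b_n b_{n-1} b_{n-2}\, d_n d_{n-1} d_{n-2}\, s_{n-3},$$
which is the defining recurrence for $s_n$; hence $A_n=s_n/t_n$.

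The only slightly subtle point, which I expect to be the main hurdle, is the case $n=2$, for which the recurrence (\ref{convergenti}) is not literally stated. A direct matrix computation gives $A_2=(a_2/b_2)A_1+(c_2/d_2)A_0+1$; the constant $1$ plays the role of $A_{-1}$. To treat this uniformly I would adopt the formal value $A_{-1}=s_{-1}/t_{-1}$ with $t_{-1}:=1/d_0$, which is consistent with extending the product defining $t_n$ backwards by one step via $t_{n-1}=t_n/(b_n d_n)$. With this convention the prescribed $s_{-1}=1/d_0$ is precisely what is needed for the $s_n$ recurrence to hold at $n=2$ as well, and the induction goes through. Beyond this bookkeeping no further difficulty is expected.
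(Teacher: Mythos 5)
Your proof is correct and takes essentially the same route as the paper's: strong induction on $n$ using the convergent recurrence with rational partial quotients $a_n/b_n$, $c_n/d_n$, then clearing denominators via the ratios $t_n/t_{n-1}$, $t_n/t_{n-2}$, $t_n/t_{n-3}$. Your explicit treatment of the base cases and of the $n=2$ step (where the convention $s_{-1}=1/d_0$, equivalently $A_{-1}=1$ with $t_{-1}=1/d_0$, makes the recurrence uniform) merely fills in what the paper dismisses as ``straightforward.''
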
 
\begin{proof}
We prove the theorem by induction. The verification of the inductive basis is straightforward.\\
Let us suppose the thesis true for all the integers less or equal than $n-1$ and we prove it for $n$. Considering the recurrences (\ref{convergenti}) and the inductive hypothesis, we have
\[ A_n=\cfrac{a_n}{b_n}A_{n-1}+\cfrac{c_n}{d_n}A_{n-2}+A_{n-3}=\cfrac{a_n}{b_n}\cfrac{s_{n-1}}{t_{n-1}}+\cfrac{c_n}{d_n}\cfrac{s_{n-2}}{t_{n-2}}+\cfrac{s_{n-3}}{t_{n-3}}= \] \[=\cfrac{a_ns_{n-1}}{b_nb_0b_1d_1\cdots b_{n-1}d_{n-1}}+\cfrac{c_ns_{n-2}}{d_nb_0b_1d_1\cdots b_{n-2}d_{n-2}}+\cfrac{s_{n-3}}{b_0b_1d_1\cdots b_{n-3}d_{n-3}}=\]
\[=\cfrac{a_nd_ns_{n-1}+b_nb_{n-1}c_nd_{n-1}s_{n-2}+b_nb_{n-1}b_{n-2}d_nd_{n-1}d_{n-2}s_{n-3}}{b_0b_1d_1\cdots b_nd_n}=\cfrac{s_n}{t_n}\ .\] 
\end{proof} 
\begin{remark}
Even if we have posed the condition $s_{-1}=1/d_0$, the sequence $(s_n)_{n=0}^{\infty}$ is an integer sequence, since
$$s_2=a_2d_2s_{1}+b_2b_{1}c_2d_{1}s_{0}+b_2b_{1}b_{0}d_2d_{1}.$$
\end{remark}
It is possible to prove similar results for $(B_n)$ and $(C_n)$:
\[ B_n=\cfrac{s'_n}{t'_n}\quad n\geq0, \]
where $(s'_n)_{n=0}^{\infty}$ recurs as the sequence $(s_n)$, but with initial conditions
$$\begin{cases} s'_0=c_0 \cr s'_1=a_1c_0+b_1d_0 \cr s'_2=b_1b_2c_0c_2+a_1a_2c_0d_2+a_2b_1d_0d_2 \end{cases}$$
and
$$\begin{cases} t'_0=d_0,\quad t'_1=d_0b_1 \cr t'_n=d_0b_1\prod_{i=2}^nb_id_i,\quad n\geq2 \ . \end{cases}$$
Similarly we obtain
\[ C_n=\cfrac{s''_n}{t''_n}, \ \ \ n\geq 0, \]
where $(s''_n)_{n=0}^{\infty}$ recurs as $(s_n)$ with initial conditions 
$$\begin{cases} s''_0=1,\quad s''_1=a_1 \cr s''_2=b_1b_2c_2+a_1a_2d_2  \end{cases}$$
and
$$\begin{cases} t''_0=1,\quad t''_1=b_1 \cr t''_n=b_1\prod_{i=2}^nb_id_i,\quad n\geq2 \ . \end{cases}$$
We can conclude that we have the following expressions for the convergents of a bifurcating continued fraction with rational partial quotients:
\[\cfrac{A_0}{C_0}=\cfrac{s_0}{b_0s''_0},\quad \cfrac{A_n}{C_n}=\cfrac{s_n}{b_0d_1s''_n} \quad \forall n\geq1\]
and
$$ \cfrac{B_n}{C_n}=\cfrac{s'_n}{d_0s''_n}\quad \forall n\geq0, $$
where the sequences $(s_n), (s'_n), (s''_n)$ are integer sequences. Using these sequences we can obtain another matricial representation for the convergents.\\
Now, we study the Hermite problem for cubic irrationalities, observing a connection between the polynomials $\mu_n$ and the bifurcating continued fractions. By using rational partial quotients we can give a periodic expansion for every couple of cubic irrationalities of the kind $(\sqrt[3]{d^2},\sqrt[3]{d})$, whose approximations are provided by the polynomials $\mu_n$. In order to do this, we recall that polynomials $\mu_n=\mu_n(e,k,d,z)$, where now we ever consider $e=3$, have the following matricial representation:
\begin{equation} \label{appmat} \begin{pmatrix} z & d & 0 \cr 0 & z & d \cr 1 & 0 & z \end{pmatrix}^n=\begin{pmatrix} \mu_n(0) & d\mu_n(2) & d\mu_n(1) \cr \mu_n(1) & \mu_n(0) & d\mu_n(2) \cr \mu_n(2) & \mu_n(1) & \mu_n(0)  \end{pmatrix}, \end{equation}
where we write the only dependence from $k$.
\begin{theorem} \label{fcbr} The periodic bifurcating continued fraction
\begin{equation} \label {fcbredei} \left[\left\{z,\cfrac{2z}{d},\overline{\cfrac{3dz}{z^3+d^2},3z,\cfrac{3z}{d}}\right\},\left\{0,-\cfrac{z^2}{d},\overline{-\cfrac{3z^2}{z^3+d^2},-\cfrac{3dz^2}{z^3+d^2},-\cfrac{3z^2}{d}}\right\}\right]  \end{equation}
converges for every integer $z\not=0$ to the couple of irrationals $(\sqrt[3]{d^2},\sqrt[3]{d})$ and its convergents are the couple of rationals $$\left(\cfrac{\mu_n(3,0,d,z)}{\mu_n(3,2,d,z)},\cfrac{\mu_n(3,1,d,z)}{\mu_n(3,2,d,z)}\right),$$ for $\mu_n(e,k,d,z)$ polynomials defined in (\ref{appoli}), $n\geq1$.
\end{theorem}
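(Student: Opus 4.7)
Plan: The strategy is to reduce the theorem to an inductive identification of the convergents with the stated ratios of $\mu_n$'s; once that is established, the limits $(\sqrt[3]{d^2},\sqrt[3]{d})$ follow immediately from Theorem \ref{mu-3}. Setting $v_n=(\mu_n(0),\mu_n(1),\mu_n(2))^T$ (the first column of $M^n$, where $M$ is the matrix of (\ref{appmat})), the goal is to show that the convergent vectors $(A_n,B_n,C_n)^T$ obtained from the matricial description (\ref{matrix-fcb}) applied to the $3\times 3$ rational-partial-quotient matrices of (\ref{fcbredei}) are each a nonzero scalar multiple of a corresponding $v_m$.

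I would proceed by strong induction on $n$. Base cases: compute the first few $(A_n,B_n,C_n)$ directly from (\ref{convergenti}) using the initial partial quotients $a_0/b_0=z$, $c_0/d_0=0$, $a_1/b_1=2z/d$, $c_1/d_1=-z^2/d$ (and the first entries of the periodic block), and match them term-by-term against $\mu_m(0),\mu_m(1),\mu_m(2)$ from (\ref{appoli}); this determines the scaling scalars $\lambda_n$ and fixes the index shift between $n$ and $m$. Inductive step: for $n$ in the periodic regime, the recurrence (\ref{convergenti}) together with the $3$-periodicity of the partial quotients expresses $(A_n,B_n,C_n)$ as an explicit linear combination of the previous three triples $(A_{n-j},B_{n-j},C_{n-j})$, $j=1,2,3$. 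By the inductive hypothesis, each of these is a scalar multiple of the corresponding $v_m$; using the vector recurrence $v_m=3z\,v_{m-1}-3z^2\,v_{m-2}+(z^3+d^2)\,v_{m-3}$, valid componentwise because $(x-z)^3-d^2$ is the characteristic polynomial of $M$ (so by Cayley--Hamilton the columns of $M^n$ inherit this recurrence), I can rewrite the right-hand side in terms of $v_m$ alone and read off $\lambda_n$. Passing to the limit and invoking Theorem \ref{mu-3} then yields $(A_n/C_n, B_n/C_n)\to(\sqrt[3]{d^2},\sqrt[3]{d})$.

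The main obstacle is the algebraic bookkeeping inside the inductive step. The convergent recurrence and the $v_m$ recurrence carry different coefficients, so the scalars $\lambda_n$ cannot be constant: they must vary with $n\bmod 3$ in a controlled way, because the three distinct partial-quotient types in the periodic block (with denominators $z^3+d^2$, $1$, and $d$ on the $a$-side) each introduce a different multiplicative correction between consecutive $\lambda_n$'s. Verifying that these corrections are mutually consistent across the three components $k=0,1,2$ reduces to checking a polynomial identity in $z$ and $d$ separately in each of the three residue classes $n\bmod 3$; this is the only nontrivial computation, and once it is carried out the rest of the argument is formal.
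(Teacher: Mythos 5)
Your proposal is correct and follows essentially the same route as the paper: reduce via Theorem \ref{mu-3} to identifying the convergents with the ratios $\mu_{n+1}(k)/\mu_{n+1}(2)$, then induct with a case split on $n\bmod 3$, matching the third-order convergent recurrence (\ref{convergenti}) against the recurrence $\mu_m=3z\mu_{m-1}-3z^2\mu_{m-2}+(z^3+d^2)\mu_{m-3}$ coming from $(x-z)^3-d^2$, while tracking an $n$-dependent scalar. The paper merely packages your scalar $\lambda_n$ explicitly through Lemma \ref{fcbr-conv}, clearing denominators to integer sequences $s_n,s'_n,s''_n$ and proving $s_n=d^{\left[\frac{2(n+1)}{3}\right]}(d^2+z^3)^{\left[\frac{2n}{3}\right]}\mu_{n+1}(0)$ and its analogues.
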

\begin{proof}
By Theorem \ref{mu-3}, we have only to prove that the convergents of (\ref{fcbredei}) are $$\left(\cfrac{\mu_n(3,0,d,z)}{\mu_n(3,2,d,z)},\cfrac{\mu_n(3,1,d,z)}{\mu_n(3,2,d,z)}\right).$$ 
In this way the periodic bifurcating continued fraction (\ref{fcbredei}) clearly converges to $(\sqrt[3]{d^2},\sqrt[3]{d})$.
For seek of simplicity we specify the only dependence on $k$ for the polynomials $\mu_n$:
\[ \mu_n(3,k,d,z)=\mu_n(k). \]
We will use the representation of the convergents showed in Lemma \ref{fcbr-conv}. We start observing that in this case we have
$$b_0=1,\quad d_0=1,\quad d_1=d$$
and
$$s_0=a_0=z=\mu_1(0),\quad s_1=a_0a_1d_1 + b_0b_1c_1=2dz^2-dz^2=dz^2=d\mu_2(0)$$
$$s'_0=c_0=0=\mu_1(1),\quad s'_1=a_1c_0 + b_1d_0=d=\mu_2(1)$$
$$s''_0=1=\mu_1(2),\quad s''_1=a_1=2z=\mu_2(2)\ .$$
Therefore, for the convergents of (\ref{fcbredei}) we initially have
$$\cfrac{s_0}{b_0s''_0}=\cfrac{\mu_1(0)}{\mu_1(2)}\ ,\quad \cfrac{s'_0}{d_0s''_0}=\cfrac{\mu_1(1)}{\mu_1(2)}$$
$$\cfrac{s_1}{b_0d_1s''_1}=\cfrac{s_1}{ds''_1}=\cfrac{\mu_2(0)}{\mu_2(2)}\ ,\quad \cfrac{s'_1}{d_0s''_1}=\cfrac{\mu_2(1)}{\mu_2(2)}\ .$$
Now, we prove by induction the following relation:
\begin{equation} \label{sn-mun} s_n=d^{\left[\frac{2(n+1)}{3}\right]}(d^2+z^3)^{\left[\frac{2n}{3}\right]}\mu_{n+1}(0),\quad \forall n\geq 2\ .\end{equation}
The inductive basis is straightforward, finding that
$$s_2=d^2(d^2+z^3)\mu_3(0)\ .$$
Now we proceed with the induction, supposing true the thesis for every integer less or equal than $n-1$ and proving the thesis for $n$. Since the period of the fraction is 3, we have to discuss 3 cases for $n$, i.e.,
$$n\equiv 0\mod 3,\quad n\equiv 1\mod 3,\quad n\equiv 2\mod 3.$$
In this proof we only consider the case $n\equiv 0\mod 3$, for the other case the proof is similar. By Lemma \ref{fcbr-conv} we know that
$$s_n=a_nd_ns_{n-1}+b_nb_{n-1}c_nd_{n-1}s_{n-2}+b_nb_{n-1}b_{n-2}d_nd_{n-1}d_{n-2}s_{n-3}.$$
Since $n\equiv 0\mod 3$, we have
$$\begin{cases} a_n=3z,\quad b_n=1,\quad c_n=-3dz^2,\quad d_n=z^3+d^2 \cr a_{n-1}=3dz,\quad b_{n-1}=z^3+d^2,\quad c_{n-1}=-3z^2,\quad d_{n-1}=z^3+d^2 \cr a_{n-2}=3z,\quad b_{n-2}=d,\quad c_{n-2}=-3z^2,\quad d_{n-2}=d \ . \end{cases}$$
Thus, considering the inductive hypothesis, we have
$$s_n=3z(d^2+z^3)d^{\left[\frac{2n}{3}\right]}(d^2+z^3)^{\left[\frac{2n-2}{3}\right]}\mu_{n}(0)$$
$$-3z^2d(d^2+z^3)^2d^{\left[\frac{2(n-1)}{3}\right]}(d^2+z^3)^{\left[\frac{2n-4}{3}\right]}\mu_{n-1}(0)$$
$$+(d^2+z^3)^3d^2d^{\left[\frac{2(n-2)}{3}\right]}(d^2+z^3)^{\left[\frac{2n-6}{3}\right]}\mu_{n-2}(0)$$
i.e.,
$$s_n=3zd^{\left[\frac{2n}{3}\right]}(d^2+z^3)^{\left[\frac{2n+1}{3}\right]}\mu_{n}(0)$$
$$-3z^2d^{\left[\frac{2n+1}{3}\right]}(d^2+z^3)^{\left[\frac{2(n+1)}{3}\right]}\mu_{n-1}(0)$$
$$+(d^2+z^3)d^{\left[\frac{2(n+1)}{3}\right]}(d^2+z^3)^{\left[\frac{2n}{3}\right]}\mu_{n-2}(0).$$
Since $n\equiv 0\mod3$, i.e., $n=3k$, we have the following identities
$$\left[\cfrac{2n}{3}\right]=\left[\cfrac{2(n+1)}{3}\right]=\left[\cfrac{2n+1}{3}\right]$$
indeed,
$$\left[\cfrac{2\cdot3k}{3}\right]=2k,\quad \left[\cfrac{2(3k+1)}{3}\right]=\left[2k+\frac{2}{3}\right]=2k,\quad \left[\cfrac{2\cdot3k+1}{3}\right]=\left[2k+\frac{1}{3}\right]=2k.$$
Therefore, we have
$$s_n=d^{\left[\frac{2(n+1)}{3}\right]}(d^2+z^3)^{\left[\frac{2n}{3}\right]}(3z\mu_{n}(0)-3z^2\mu_{n-1}(0)+(d^2+z^3)\mu_{n-2}(0))$$
and remembering the recurrence relation involving the polynomials $\mu_n$, the equation (\ref{sn-mun}) follows.
It is possible to prove in a similar way the formulas
$$s'_n=d^{\left[\frac{2n-1}{3}\right]}(d^2+z^3)^{\left[\frac{2n}{3}\right]}\mu_{n+1}(1),\quad \forall n\geq2$$
$$ds''_n=d^{\left[\frac{2(n+1)}{3}\right]}(d^2+z^3)^{\left[\frac{2n}{3}\right]}\mu_{n+1}(2),\quad \forall n\geq2.$$
Hence, for the couple of convergents of the fraction (\ref{fcbredei}) we obtain
$$\cfrac{s_n}{b_0d_1s''_n}=\cfrac{s_n}{ds''_n}=\cfrac{\mu_{n+1}(0)}{\mu_{n+1}(2)},\quad \forall n\geq2$$
$$\cfrac{s'_n}{d_0s''_n}=\cfrac{s'_n}{s''_n}=\cfrac{\mu_{n+1}(1)}{\mu_{n+1}(2)},\quad \forall n\geq2$$
and, considering what it has been observed for indexes $n=0,1$, the proof is completed.
\end{proof}
In the previous theorem we have seen an important result about cubic roots. Indeed, we found a periodic representation in the sense of the Hermite problem providing rational approximations for cubic roots related to a generalization of the R\'edei rational functions. It is interesting to underline that the previous representation is valid for every choice of $z$ integer, providing in this way different rational approximations for the same cubic root. Moreover, the period of (\ref{fcbredei}) is really short and it is in general shorter than the period of the bifurcating continued fractions obtained from the Jacobi algorithm. We highlight such  considerations and the differences between our representation and the Jacobi one in the next examples.
\begin{example}
Let us consider the cubic roots $(\sqrt[3]{16},\sqrt[3]{4})$. Choosing, e.g., $z=[\sqrt[3]{16}]=2$, by previous theorem, we have
$$(\sqrt[3]{16},\sqrt[3]{4})=\left[\left\{2,1,\overline{1,6,\cfrac{3}{2}}\right\},\left\{0,-1,\overline{-\cfrac{1}{2},-2,-3}\right\}\right],$$
against the Jacobi algorithm which seems to have non--periodicity. Indeed, evaluating the first 1000 partial quotients of the expansion of $(\sqrt[3]{16},\sqrt[3]{4})$ none periodic patterns appear with the Jacobi algorithm.
\end{example} 
\begin{example}
Let us consider the cubic roots $(\sqrt[3]{25},\sqrt[3]{5})$. In this case the Jacobi algorithm provides a periodic expansion of period 6 and pre--period 7, given by
$$[\{2,1,3,2,1,1,7,\overline{1,1,2,3,1,6}\},\{1,1,1,0,0,0,0,\overline{0,0,0,0,1,2}\}]$$
against our representation of period 3 and pre--period 2, which, choosing, e.g., $z=[\sqrt[3]{5}]=1$, is
$$\left[\left\{1,\cfrac{2}{5},\overline{\cfrac{15}{26},3,\cfrac{3}{5}}\right\},\left\{0,-\cfrac{1}{5},\overline{-\cfrac{3}{26},-\cfrac{15}{26},-\cfrac{3}{5}}\right\}\right].$$
\end{example}
Finally, previous representation allows to retrieve periodic representations and approximations by using linear recurrent sequences for a vast class of cubic irrationalities. Said $(A_n)_{n=0}^{+\infty}, (B_n)_{n=0}^{+\infty}, (C_n)_{n=0}^{+\infty}$ the sequences which determine the convergents of the fraction (\ref{fcbredei}), if we consider the matrix
$$\begin{pmatrix} a_{00} & a_{01} & a_{02} \cr a_{10} & a_{11} & a_{12} \cr a_{20} & a_{21} & a_{22}  \end{pmatrix}\begin{pmatrix} A_n & A_{n-1} & A_{n-2} \cr B_n & B_{n-1} & B_{n-2} \cr C_n & C_{n-1} & C_{n-2} \end{pmatrix}=\begin{pmatrix} \tilde A_n & \tilde A_{n-1} & \tilde A_{n-2} \cr \tilde B_n & \tilde B_{n-1} & \tilde B_{n-2} \cr \tilde C_n & \tilde C_{n-1} & \tilde C_{n-2} \end{pmatrix}$$
it is easy to study the convergence of $\cfrac{\tilde A_n}{\tilde{C_n}}$ e $\cfrac{\tilde B_n}{\tilde C_n}\ .$ In fact, we have
$$\lim_{n\rightarrow\infty}\cfrac{\tilde A_n}{\tilde{C_n}}=\lim_{n\rightarrow\infty}\cfrac{a_{00}A_n+a_{01}B_n+a_{02}C_n}{a_{20}A_n+a_{21}B_n+a_{22}C_n}=\lim_{n\rightarrow\infty}\cfrac{a_{00}\frac{A_n}{C_n}+a_{01}\frac{B_n}{C_n}+a_{02}}{a_{20}\frac{A_n}{C_n}+a_{21}\frac{B_n}{C_n}+a_{22}}\ ,$$
i.e.,
\begin{equation} \label{irr-cub1} \lim_{n\rightarrow\infty}\cfrac{\tilde A_n}{\tilde{C_n}}=\cfrac{a_{00}\sqrt[3]{d^2}+a_{01}\sqrt[3]{d}+a_{02}}{a_{20}\sqrt[3]{d^2}+a_{21}\sqrt[3]{d}+a_{22}} \end{equation}
and similarly
\begin{equation} \label{irr-cub2}\lim_{n\rightarrow\infty}\cfrac{\tilde B_n}{\tilde{C_n}}=\cfrac{a_{10}\sqrt[3]{d^2}+a_{11}\sqrt[3]{d}+a_{12}}{a_{20}\sqrt[3]{d^2}+a_{21}\sqrt[3]{d}+a_{22}}\ .\end{equation}
Therefore, starting from the approximations of the previous theorem we can construct rational approximations, connected to generalized R\'edei polynomials, for all these cubic irrationalities. However, we do not know if these approximations correspond to convergents of some bifurcating continued fraction. To make this happen, it is necessary that the matrix
$$\begin{pmatrix} a_{00} & a_{01} & a_{02} \cr a_{10} & a_{11} & a_{12} \cr a_{20} & a_{21} & a_{22}  \end{pmatrix}$$
is expressed as a product of matrices of type (\ref{matrix-fcb}). The problem of the factorization of any matrix into a product of the kind (\ref{matrix-fcb}) is really hard and we do not study it in this paper. However, we can obtain another interesting result. If we consider the product 
$$A=\begin{pmatrix} a_0 & 1 & 0 \cr b_0 & 0 & 1 \cr 1 & 0 & 0  \end{pmatrix}\cdots\begin{pmatrix} a_3 & 1 & 0 \cr b_3 & 0 & 1 \cr 1 & 0 & 0\end{pmatrix}, $$
the matrix $A$ has first row with entries
$$a_0+a_3+a_0a_1a_2a_3+a_2a_3b_1+a_0a_3b_2+a_0a_1b_3+b_1b_3,\quad 1+a_0a_1a_2+a_2b_1+a_0b_2,\quad a_0a_1+b_1$$
and third row with entries
$$1+a_1a_2a_3+a_3b_2+a_1b_3,\quad a_1a_2+b_2,\quad a_1.$$
If these entries are matched with the entries of a generic matrix
$$\begin{pmatrix} a_{00} & a_{01} & a_{02} \cr a_{10} & a_{11} & a_{12} \cr a_{20} & a_{21} & a_{22}  \end{pmatrix}$$
the system has rational solutions
$$\begin{cases} a_0=1,\quad a_1=a_{22} \cr a_2=\cfrac{1-a_{01}+a_{21}}{a_{22}-a_{02}},\quad a_3=\cfrac{a_{22}-a_{00}a_{22}+a_{02}a_{20}-a_{22}}{a_{02}a_{21}-a_{01}a_{22}} \cr b_0=1,\quad b_1=a_{02}-a_{22} \cr b_2=\cfrac{a_{01}a_{22}-a_{22}-a_{02}a_{21}}{a_{22}-a_{02}},\quad b_3=\cfrac{a_{21}-a_{00}a_{21}+a_{01}a_{20}-a_{01}}{a_{01}a_{22}-a_{02}a_{21}}\ .  \end{cases}$$ 
Using these choices for $a_i,b_i$, $i=0,1,2,3$, the second row of the matrix $A$ can not clearly be any row, but it will be determined by these values. Therefore, we are able to construct a periodic bifurcating continued fraction for any cubic irrationality (\ref{irr-cub1}) paired with another determined cubic irrationality, starting from the fraction (\ref{fcbredei}).

\medskip

\noindent AMS Classification Numbers: 11J68, 11J70


\begin{thebibliography}{99}

\bibitem{abcm} M. Abrate, S. Barbero, U. Cerruti, N. Murru, \emph{Periodic representations and rational approximations of square roots}, Submitted to The Journal of Approximation Theory, 2011.

\bibitem{abcm2} M. Abrate, S. Barbero, U.Cerruti, N. Murru, \emph{Accelerations of generalized Fibonacci sequences}, The Fibonacci Quarterly,  Vol. \textbf{49}, 255--266, 2011.

\bibitem{bcm} S. Barbero, U. Cerruti, N. Murru, \emph{Solving the Pell equation via R\'edei rational functions}, The Fibonacci Qaurterly, Vol. \textbf{48}, 348--357, 2010.

\bibitem{Ber} L. Bernstein, \emph{The Jacobi--Perron algorithm -- its theory and application}, Lectures Notes in Mathematics, Vol. \textbf{207}, 1971.

\bibitem{Bre} A. J. Brentjes, \emph{Multi--dimensional continued fraction algorithms}, Mathematical Centre Tracts, Amsterdam, 1981.

\bibitem{Daus} P. H. Daus, \emph{Normal ternary continued fraction expansions for the cube roots of integers}, American Journal of Mathematics, Vol. \textbf{44}, No. \textbf{4}, 279--296, 1922.

\bibitem{Euler} L. Euler, \emph{De relatione inter ternas pluresve quantitates instituenda}, Commentationes arithmeticae collectae, Vol. \textbf{II}, 99--104, 1749.

\bibitem{Fer} H. R. P. Ferguson, R. W. Forcade, \emph{Generalization of the Euclidean algorithm for real numbers to all dimensions higher than two}, Bull. Amer. Math. Soc. \textbf{1}, No. \textbf{6}, 912--914, 1979.

\bibitem{Gar} T. Garrity, \emph{On periodic sequences for algebraic numbers}, Journal of Number Theory, Vol. \textbf{88}, 86--103, 2001.

\bibitem{Gomez} D. Gomez, A. Winterhof, \emph{Multiplicative character sums of recurring sequences with R\'edei functions}, Lecture Notes in Computer Science, Vol. \textbf{5203}, Sequences and Their Applications, 175--181, 2008.

\bibitem{Gup1} A. Gupta, A. Mittal, \emph{Bifurcating continued fractions}, available on http://front.math.ucdavis.edu/math.GM/0002227, 2000.

\bibitem{Gup2} A. Gupta, A. Mittal, \emph{Bifurcating continued fractions II}, available on http://front.math.ucdavis.edu/math.GM/0008060, 2000.

\bibitem{Her} C. Hermite, \emph{Letter to C. G. J. Jacobi}, J. Reine Angew. Math. \textbf{40}, 286, 1839.

\bibitem{Ito} S. Ito, J. Fujii, H. Higashino, S. Yasutomi, \emph{On simultaneous approximation to $(\alpha,\alpha^2)$ with $\alpha^3+k\alpha-1=0$}, Journal of Number Theory, \textbf{99}, p. 255--283, 2003.

\bibitem{Jac}  C. G. J. Jacobi, Ges. Werke, Vol. \textbf{VI}, 385--426, 1868.

\bibitem{Lehmer} D. N. Lehmer, \emph{On Jacobi's extension of the continued fraction algorithm}, National Academy of Sciences, Vol. \textbf{4}, No. \textbf{12}, 360--364, 1918.

\bibitem{Lidl} R. Lidl, G. L. Mullen and G. Turnwald, \emph{Dickson polynomials}, Pitman Monogr. Surveys Pure appl. Math. \textbf{65}, Longman, 1993.

\bibitem{Nob} R. Nobauer, \emph{Cryptanalysis of the R\'edei scheme}, Contributions  to General Algebra, \textbf{3}, 255--264, 1984. 

\bibitem{Olds} C. D. Olds, \emph{Continued fractions}, Random House, 1963.

\bibitem{Per} O. Perron, \emph{Grundlagen fur eine theorie des Jacobischen kettenbruchalgorithmus}, Math. Ann. \textbf{64}, 1--76, 1907.

\bibitem{Redei} L. R\'{e}dei, \emph{Uber eindeuting umkehrbare polynome in endlichen korpen}, Acta Sci. Math. (Szeged), \textbf{11}, 85--92, 1946.

\bibitem{Sc} F. Schweiger, \emph{The metrical theory of Jacobi--Perron algorithm}, Lectures Notes in Mathematics, Vol. \textbf{334}, Springer--Verlag, Berlin, 1973.

\bibitem{Tamura} J. Tamura, S. Yasutomi, \emph{A new multidimensional continued fraction algorithm}, Mathematics of Computation, Vol. \textbf{78}, No. \textbf{268}, 2209--2222, 2009.

\bibitem{Topu} A. Topuzoglu, A. Winterhof, \emph{Topics in Geometry, Coding Theory and Cryptography}, Algebra and Applications, Vol. \textbf{6}, 135--166, 2006.

\bibitem{Wall} H. S. Wall, \emph{Analytic theory of continued fractions}, D. Van Nostrand Company, Inc., 1948.

\end{thebibliography}
\end{document}